\DeclareMathAlphabet{\mathpzc}{OT1}{pzc}{m}{it}
\DeclareSymbolFont{cyrletters}{OT2}{wncyr}{m}{n}
\DeclareMathSymbol{\Sha}{\mathalpha}{cyrletters}{"58}
\DeclareMathOperator{\Col}{Col}
\DeclareMathOperator{\Nul}{Nul}
\newcommand{\F}{\mathbb{F}}
\newcommand{\HLINE}{\hbox to\hsize{\hrulefill}}
\newtheorem*{thm}{Theorem}
\newtheorem*{cor}{Corollary}
\begin{document}

\title{Symmetric Matrices over $\F_2$ and the Lights Out Problem}
\author{Igor Minevich}
\maketitle
\begin{abstract}
We prove that the range of a symmetric matrix over $\F_2$ contains the vector of its diagonal elements. We apply the theorem to a generalization of the ``Lights Out'' problem on graphs.
\end{abstract}

\begin{section}{Introduction}
The purpose of this note is to prove that the range of a symmetric matrix over $\F_2$ contains the vector of its diagonal elements; we do this in section \ref{sect:linalg}. We then apply this theorem to a generalization of the ``Lights Out'' problem on graphs: we have a simple graph and the vertices all have a state of either `on' (represented by $0 \in \F_2$) or `off' (represented by $1 \in \F_2$). When a vertex is `clicked,' the states of it and its adjacent vertices are toggled. Starting with arbitrary states for all vertices, the problem is to determine whether or not we can click a subset of vertices and get to the state where they are all off, hence the name ``Lights Out''. It is known that if we start with all vertices on, then this is possible (see ~\cite{MR1386937} or ~\cite{MR987520}). The theorem we prove here generalizes this result to the case where clicking some of the vertices does not actually affect the state of those vertices, but only the states of those adjacent to them.
\end{section}

\begin{section}{Linear Algebra Theorem}
\label{sect:linalg}
\begin{thm}
Any symmetric $N \times N$ matrix $A = (a_{ij})$ over $\F_2$ has its diagonal vector $\vec d = (a_{11}, a_{22}, \dots, a_{NN})^T$ in its range.
\end{thm}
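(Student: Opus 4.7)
The plan is to show that $\vec{d}$ is orthogonal to every element of the null space of $A$ (with respect to the standard bilinear form on $\F_2^N$), and then invoke the fact that $\mathrm{range}(A) = \mathrm{Nul}(A)^\perp$. Since $A$ is symmetric, its column space equals its row space, and over $\F_2$ the standard form $\langle \vec u, \vec v\rangle = \vec u^T \vec v$ is non-degenerate, so $\mathrm{Row}(A)^\perp = \mathrm{Nul}(A)$ and $(\mathrm{Nul}(A))^\perp = \mathrm{Row}(A) = \mathrm{Col}(A)$. Thus it suffices to prove that $\vec d^T \vec x = 0$ for every $\vec x \in \mathrm{Nul}(A)$.

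The heart of the argument is the identity $\vec x^T A \vec x = \vec d^T \vec x$ in $\F_2$. Expanding gives
\[
\vec x^T A \vec x \;=\; \sum_{i} a_{ii} x_i^2 \;+\; \sum_{i \neq j} a_{ij} x_i x_j.
\]
The off-diagonal sum groups into pairs $a_{ij} x_i x_j + a_{ji} x_j x_i$, each of which equals $2 a_{ij} x_i x_j = 0$ in $\F_2$ by symmetry of $A$. The diagonal sum reduces to $\sum_i a_{ii} x_i$ because $x_i^2 = x_i$ for $x_i \in \F_2$. Hence $\vec x^T A \vec x = \vec d^T \vec x$.

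Now if $\vec x \in \mathrm{Nul}(A)$, then $A\vec x = 0$, so $\vec x^T A \vec x = 0$, and therefore $\vec d^T \vec x = 0$. Since this holds for every $\vec x \in \mathrm{Nul}(A)$, we conclude $\vec d \in \mathrm{Nul}(A)^\perp = \mathrm{Col}(A)$, which is the range of $A$.

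I do not anticipate a genuine obstacle; the only conceptual step is noticing the char-$2$ cancellation that makes $\vec x^T A \vec x$ collapse to the linear form $\vec d^T \vec x$. Everything else is a standard duality argument, and the only subtlety to confirm carefully is that over $\F_2$ the relation $(\mathrm{Nul}\,A)^\perp = \mathrm{Row}(A)$ still holds, which follows from non-degeneracy of the standard bilinear form together with the dimension count $\dim V + \dim V^\perp = N$.
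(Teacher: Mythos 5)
Your proof is correct. It shares the paper's outer strategy --- showing that $\vec d$ is orthogonal to every $\vec x \in \Nul(A)$ and concluding $\vec d \in \Nul(A)^\perp = \Col(A)$ via non-degeneracy of the standard form and rank--nullity --- but the way you establish the orthogonality is genuinely different. The paper takes $\vec x \in \Nul(A)$, permutes coordinates so that $\vec x = (1,\dots,1,0,\dots,0)$ (harmless over $\F_2$, where every vector has $0$/$1$ entries), reads $A\vec x = \vec 0$ as saying that the last column in the support is the sum of the preceding ones, and then works through an explicit double sum in which the off-diagonal terms pair off by symmetry, leaving $\sum_j a_{jj} = 0$ over the support of $\vec x$. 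You instead prove the identity $\vec x^T A \vec x = \vec d^T \vec x$ for \emph{all} $\vec x$, using $x_i^2 = x_i$ in $\F_2$ and the pairwise cancellation $a_{ij}x_ix_j + a_{ji}x_jx_i = 0$, and then simply evaluate at $\vec x \in \Nul(A)$. This is the same characteristic-$2$ cancellation the paper performs, but packaged coordinate-freely: it avoids the permutation/WLOG step, is shorter, and makes the conceptual point explicit that in characteristic $2$ the quadratic form attached to a symmetric matrix collapses to the linear form given by its diagonal. The paper's computation is more elementary in that it never invokes quadratic forms; your route is cleaner, and I see no gap in it.
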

\begin{proof}
We show that if $A\vec x = \vec 0$ then $\vec x \cdot \vec d = 0$. This implies $\vec d$ is perpendicular to every vector in $\Nul A$, i.e. $\vec d \in \Nul(A)^\perp = \Col(A^T) = \Col(A)$, as desired. If $\vec x = \vec 0$, then this is obvious. Without loss of generality, we may assume $\vec x = (1, \dots, 1, 0, \dots, 0)$, where the first $n+1$ entries are $1$ and $n \ge 0$ (otherwise, we can just permute the entries of $\vec x$ and also the corresponding columns and rows of $A$ so that this is true). The linear dependence relation on the columns of $A$ given by $A\vec x = \vec 0$ says that the $(n+1)$st column is the sum of the first $n$ columns, so we have
\[a_{j, n+1} = a_{jj} + \sum_{\substack{i=1\\i \ne j}}^{n} a_{ji}, \qquad j = 1, \dots, N\]
\[a_{n+1, n+1} = \sum_{j=1}^{n} a_{n+1, j} = \sum_{j=1}^{n} a_{j, n+1} =\]
\[= \sum_{j=1}^{n} a_{jj} + \sum_{j=1}^{n}\sum_{\substack{i=1\\i \ne j}}^{n} a_{ji} = \sum_{j=1}^n a_{jj} + 2\sum_{n\ge i>j\ge 1} a_{ij} = \sum_{j=1}^n a_{jj}\]
so we see that $\vec x \cdot \vec d = \sum_{j=1}^{n+1} a_{jj} = 0$.
\end{proof}
\end{section}

\begin{section}{``Lights Out'' Problem}
\label{sect:LightsOut}
The original motivation for this theorem came from the ``Lights Out'' problem as explained in the introduction.
\begin{cor}
Let $G$ be a finite undirected graph, where there can be at most one edge between any two vertices and a vertex can be connected to itself. Let $v_1, \dots, v_n$ be the vertices connected to themselves. Assign to each vertex a state of either $1$ or $0$, and assume that clicking a vertex makes those vertices adjacent to it (including itself if it is connected to itself) switch their state. Then, starting with all vertices at the state of 0, we can choose vertices to click so that, as a result, precisely $v_1, \dots, v_n$ are 1 and the rest are 0.
\label{cor}
\end{cor}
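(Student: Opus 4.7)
The plan is to reformulate the clicking operation as multiplication by the adjacency matrix of $G$ over $\F_2$ and then invoke the theorem directly. Let $N$ be the total number of vertices of $G$, and order them so that $v_1, \dots, v_n$ (the looped vertices) come first. Define $A = (a_{ij}) \in M_N(\F_2)$ to be the adjacency matrix, where $a_{ij} = 1$ if and only if there is an edge between $v_i$ and $v_j$. Since the graph is undirected, $A$ is symmetric, and since the loops at $v_1, \dots, v_n$ are precisely the edges from a vertex to itself, we have $a_{ii} = 1$ iff $1 \le i \le n$. In particular, the diagonal vector of $A$ is exactly the target state $\vec d = (\underbrace{1, \dots, 1}_{n}, 0, \dots, 0)^T$ we wish to reach.

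Next I would verify the key dictionary between clicks and linear algebra. Encode a choice of vertices to click as a vector $\vec x \in \F_2^N$, with $x_i = 1$ meaning $v_i$ is clicked. Clicking $v_i$ toggles precisely those $v_j$ with $a_{ji} = 1$ (this is where the convention that a self-loop causes a vertex to toggle itself matters). Since toggling is addition in $\F_2$, the cumulative state change produced by clicking the vertices indicated by $\vec x$, starting from the all-zero configuration, is exactly $A\vec x$. Thus reaching the desired final state is equivalent to solving $A\vec x = \vec d$ in $\F_2^N$.

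The conclusion is now immediate from the theorem in Section~\ref{sect:linalg}: since $A$ is symmetric over $\F_2$, its diagonal vector $\vec d$ lies in $\Col(A)$, so such an $\vec x$ exists. Any such $\vec x$ gives a list of vertices whose clicking sends the all-zero state to the state in which exactly $v_1, \dots, v_n$ are~$1$.

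There is no real obstacle here; the only point that needs a moment of care is the modeling step, namely confirming that a self-loop at $v_i$ is what makes clicking $v_i$ toggle its own state, so that the diagonal of the adjacency matrix really does record the looped vertices and the target vector equals $\vec d$. Once that bookkeeping is in place, the corollary is a one-line application of the theorem.
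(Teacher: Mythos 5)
Your proposal is correct and follows essentially the same route as the paper: build the symmetric adjacency matrix over $\F_2$, observe that its diagonal vector encodes exactly the looped vertices and that clicking the vertices indicated by $\vec x$ produces the state $A\vec x$, then apply the theorem to solve $A\vec x = \vec d$. The extra care you take with the modeling step (that a self-loop is what makes a vertex toggle itself) is exactly the point the paper also relies on, so there is nothing to add.
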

\begin{proof}
Define the $N \times N$ matrix $A$ over $\F_2$, where $N$ is the number of vertices of $G$, by $a_{ij} = 1$ if and only if the vertex $v_i$ is connected to the vertex $v_j$. Since the graph is undirected, $A$ is symmetric. Furthermore, the first $n$ diagonal elements of $A$ are $1$ and the rest are $0$, since the first $n$ vertices are the ones connected to themselves. It is easy to see that if $A\vec x = \vec y$, then $\vec y$ is precisely the vector of states obtained by starting with all vertices being 0 and clicking the vertices corresponding to 1's in $\vec x$, because multiplication by $\vec x$ simply adds up the columns corresponding to 1's in $\vec x$. Since the diagonal vector $\vec d$ is in the range of $A$, there is a vector $\vec x$ such that $A \vec x = \vec d$. Thus we can click the vertices corresponding to 1's in $\vec x$ and, as a result, the vertices that are connected to themselves will be $1$ and the rest will be $0$, as desired. 
\end{proof}

This is the generalization of the ``Lights Out'' problem on graphs to the case where not all vertices affect themselves. To make this more concrete, we first list a few examples where all vertices are connected to themselves and then an example where they are not. The original motivator for the problem was the case where the vertices of the graph are squares in a finite two-dimensional grid and clicking a square toggles its color (say between white and black) and also the colors of the squares that share an edge with the square clicked. The problem is to find out which squares to click so that, if we start with an all-white grid, we end up with all squares black. This is not easy to do by inspection for grids $5 \times 5$ and bigger, but the corollary shows it can always be done.\\
\\
We could generalize this scenario to be $n$-dimensional, where an $n$-dimensional cube affects itself and those with which it shares an $(n-1)$-dimensional hyperplane. Or we could start with an arbitrary subset of a 2-dimensional or 3-dimensional grid, perhaps in a pleasing shape such as a star or a flower, and have vertices affect themselves and the ones next to them as before. We could also change the rules and have squares affect each other diagonally, or in a torus-like fashion, where the top squares affect the bottom ones and/or the leftmost ones affect the rightmost ones (this particular case has been studied in more detail in \cite{MR2590299}). Or we could change `squares' to triangles in a triangular lattice or hexagons in a hexagonal lattice, etc.\\
\\
Now for an example of the generalization, where some vertices do affect themselves and some don't, we can have squares with green and red lamps; the green lamps affect themselves and also those around them, but the red lamps only affect those around them and not themselves. Starting with all lamps off, the goal is to turn on all the green lamps and turn off all the red ones, and by the corollary this can always be achieved. Of course, the amount of such scenarios is limited only by the imagination.
\end{section}

\bibliography{mybib}

\begin{thebibliography}{Car96}

\bibitem[Car96]{MR1386937}
Yair Caro.
\newblock Simple proofs to three parity theorems.
\newblock {\em Ars Combin.}, 42:175--180, 1996.

\bibitem[GY09]{MR2590299}
Masato Goshima and Masakazu Yamagishi.
\newblock Two remarks on torus lights out puzzle.
\newblock {\em Adv. Appl. Discrete Math.}, 4(2):115--126, 2009.

\bibitem[Sut88]{MR987520}
Klaus Sutner.
\newblock Additive automata on graphs.
\newblock {\em Complex Systems}, 2(6):649--661, 1988.

\end{thebibliography}
\bibliographystyle{alpha}

\end{document}